\theoremstyle{plain}
\newtheorem{theorem}{Theorem}
\newtheorem{lemma}{Lemma}
\theoremstyle{definition}
\newtheorem{definition}{Definition}
\theoremstyle{remark}
\numberwithin{equation}{section}
\newcommand{\e}{\epsilon}
\newcommand{\R}{\mathbb R}
\newcommand{\Lc}{\mathcal{L}}
\newcommand{\xb}{\overline{x}}
\newcommand{\yb}{\overline{y}}
\newcommand{\rst}[1]{\ensuremath{{\mathbin\upharpoonright}%
\raise-.5ex\hbox{$#1$}}}
\def\p{\partial}
\def \e {\varepsilon}
\def \Om {\Omega}
\begin{document}

\title[$L^p$ regularity and the Dini condition]{Some remarks on the $L^p$ regularity of second derivatives of solutions to non-divergence elliptic equations and the Dini condition.}
%
\author{Luis Escauriaza}
\address[Luis Escauriaza]{Universidad del Pa{\'\i}s Vasco/Euskal Herriko
Unibertsitatea\\Dpto. de Matem{\'a}ticas\\Apto. 644, 48080 Bilbao, Spain.}
\email{luis.escauriaza@ehu.eus}

\author{Santiago Montaner}
\address[Santiago Montaner]{Universidad del Pa{\'\i}s Vasco/Euskal Herriko
Unibertsitatea\\Dpto. de Matem{\'a}ticas\\Apto. 644, 48080 Bilbao, Spain.}
\email{santiago.montaner@ehu.eus}
\thanks{The authors are supported  by the grants MTM2014-53145-P and IT641-13 (GIC12/96).}

\keywords{regularity, pathological solutions, non divergence form elliptic operator}
\subjclass{Primary: 35B65}
\begin{abstract}
In this note we prove an end-point regularity result on the $L^p$ integrability of the second derivatives of solutions to non-divergence form uniformly elliptic equations whose second derivatives are a priori only known to be  integrable. The main assumption on the elliptic operator is the Dini continuity of the coefficients. We provide a counterexample showing that the Dini condition is somehow optimal. We also give a counterexample related to
the $BMO$ regularity of second derivatives of solutions to elliptic equations. These results are analogous to corresponding results for divergence form elliptic equations in \cite{Brezis,mazya}.

\end{abstract}
\maketitle
\section{Introduction}
In this note we investigate some regularity issues for solutions to non-divergence form elliptic equations whose second derivatives are locally integrable. 
Given an open bounded domain $\Omega\subset \R^n$, we will assume that $A(x)=\left(a_{ij}(x)\right)$ is a real symmetric matrix such that there is a $\lambda>0$ verifying
\begin{equation*}
\lambda |\xi|^2\le A(x)\xi\cdot \xi\le \lambda^{-1} \left|\xi\right|^2,\ \text{ for any } \xi\in\R^n,\ x\in \Omega.
\end{equation*}
Here we deal with solutions of operators of the form
\begin{equation} \label{elliptic_operator}
\Lc u=\text{\it tr}\left(A D^2u \right)=\sum_{i,j=1}^n a_{ij}(x)\partial_{ij}u,
\end{equation}
where the entries of the matrix $A$ are continuous functions in $\overline{\Omega}$.

We recall the reader the following regularity fact \cite[Lemma 9.16]{GilbargTrudinger}:
\begin{lemma}\label{mejorafacil} Let $p,q$ be such that $1<p<q<\infty$ and $f$ be in $L^q(\Omega)$. If $u$ in $W_{loc}^{2,p}(\Omega)$ verifies $\Lc u=f$ in $\Omega$, then $u \in W_{loc}^{2,q}(\Omega)$.
\end{lemma}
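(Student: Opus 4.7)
The plan is to combine the interior Calder\'on--Zygmund $W^{2,s}$ estimates for $\mathcal{L}$---which are available for every $1<s<\infty$ because the $a_{ij}$ are continuous on $\overline{\Omega}$---with a finite bootstrap based on Sobolev embedding, raising the integrability of $D^2u$ from $p$ up to $q$ in a bounded number of steps.

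First I would fix $\Omega'\Subset\Omega''\Subset\Omega$ and a cutoff $\eta\in C_c^\infty(\Omega'')$ with $\eta\equiv 1$ on $\Omega'$. Expanding $\mathcal{L}(\eta u)$ yields
$$\mathcal{L}(\eta u)=\eta f+2\sum_{i,j=1}^{n}a_{ij}\,\partial_i\eta\,\partial_j u+u\,\mathcal{L}\eta\quad\text{in }\Omega'',$$
with $\eta u$ compactly supported there. Since $\eta f\in L^q$ and the coefficients together with the derivatives of $\eta$ are bounded, the matter reduces to controlling $|u|$ and $|\nabla u|$ in successively larger $L^s$ spaces on $\Omega''$.

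Now I would iterate. Given $u\in W^{2,p_k}_{loc}(\Omega)$ for some $p_k\ge p$, Sobolev embedding applied on $\Omega''$ gives $\nabla u\in L^{p_k^\ast}(\Omega'')$ where $p_k^\ast=np_k/(n-p_k)$ when $p_k<n$, and $p_k^\ast$ can be chosen arbitrarily large once $p_k\ge n$; the function $u$ itself embeds into an even higher exponent. Setting $r_k=\min(q,p_k^\ast)$, the displayed identity places $\mathcal{L}(\eta u)$ in $L^{r_k}(\Omega'')$, and applying the Calder\'on--Zygmund estimate to the compactly supported $\eta u$ delivers $\eta u\in W^{2,r_k}$, hence $u\in W^{2,r_k}(\Omega')$. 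Because $p_k^\ast/p_k\ge n/(n-1)$ is bounded away from $1$ as long as $p_k<n$, after finitely many rounds $p_k$ surpasses $n$, at which stage we may select $r_k=q$ and finish.

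The only bookkeeping matter is that each round of the Calder\'on--Zygmund step consumes a bit of room between the nested domains; I would handle this by fixing in advance a finite chain $\Omega'\Subset\Omega_1\Subset\cdots\Subset\Omega_N\Subset\Omega$ whose length $N$ depends only on $n$, $p$, and $q$, and running the $k$-th bootstrap on $(\Omega_{k-1},\Omega_k)$. I do not expect a genuine obstacle here: the continuity of $A$ feeds directly into the $W^{2,s}$ theory that powers each iteration, and the Sobolev gain per step is quantitative, so the finiteness of the bootstrap is automatic.
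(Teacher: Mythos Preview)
The paper does not supply its own proof of this lemma; it is simply quoted as \cite[Lemma 9.16]{GilbargTrudinger}. Your bootstrap argument---cutoff, Sobolev embedding to lift the integrability of $u$ and $\nabla u$, then the Calder\'on--Zygmund $W^{2,s}$ estimate on the compactly supported $\eta u$, iterated finitely many times along a nested chain of subdomains---is exactly the standard route and is correct. The one point you might make explicit is why the Calder\'on--Zygmund step actually upgrades $\eta u$ from $W^{2,p_k}$ to $W^{2,r_k}$ rather than merely giving an a priori bound: one solves $\mathcal{L}w=\mathcal{L}(\eta u)\in L^{r_k}$ in a ball with zero boundary data to obtain $w\in W^{2,r_k}$, then invokes uniqueness in $W^{2,p_k}\cap W^{1,p_k}_0$ to identify $w=\eta u$.
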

The previous result does not cover the case $p=1$ and, as far as we know, this case has not been considered in the literature. It is the purpose of this note to deal with it. We remark that Lemma \ref{mejorafacil} is true under the mere assumption of the continuity of the coefficients. However, as we shall see, this mild assumption is not enough in order to improve the integrability of the second derivatives of $W_{loc}^{2,1}$ solutions. On the contrary, a Dini-type condition on the coefficients is sufficient for this purpose and it is optimal. Here we will consider the following Dini condition:
\begin{definition} A function $f:\Omega\subset \R^n\rightarrow \R$  is \textit{Dini continuous} in $\overline{\Omega}$ if there is continuous a non-decreasing function $\theta:[0,+\infty)\rightarrow [0,+\infty)$ verifying 
\begin{equation*}
|f(x)-f(y)|\leq \theta(|x-y|),\ \text{for any}\ x,y\in \Omega
\end{equation*}
and such that
\begin{equation}\label{integralcondition}
 \int_0^1 \frac{\theta(t)}{t}\, dt<+\infty
\end{equation}
and
\begin{equation}\label{doubling}
\theta(2t)\leq 2\theta(t),\ \text{for}\ t\in(0,\tfrac{1}{2}). 
\end{equation}
We will say that $\theta$ is the \textit{Dini modulus of continuity} of $f$.
\end{definition}
Condition \eqref{doubling} is not restrictive. In fact, as we learnt from \cite[Remark 1]{ApushNaz15}, any modulus of continuity satisfying \eqref{integralcondition} can be dominated by $$\tilde{\theta}(t)=t\sup_{\tau\in[t,1]}\frac{\theta(\tau)}{\tau},$$
which is again a Dini modulus of continuity such that $\tilde{\theta}(t)/t$ is non-decreasing. The later implies \eqref{doubling} for $\tilde{\theta}$.

Before stating our results we first briefly review  the case of elliptic equations in divergence form.
In this situation, motivated by a question raised in \cite{Serrin} and the results in \cite{HagerRoss}, H. Brezis proved the following \cite[Theorems 1 and 2]{Brezis} (proofs were published in \cite{Ancona}).
\begin{theorem}\label{T: contracaffarelli}
Let $A$ be a uniformly elliptic matrix such that $A$ is Dini continuous in $\overline{\Omega}$. Let $u$ in $W^{1,1}(\Omega)$ solve
$$\int_{\Omega}A\nabla u \cdot \nabla \varphi\, dx=0,  \ \ \text{for any}\ \varphi\ \text{in}\  C^\infty_0(\Omega).$$
Then, for any $1<p<\infty$, $u$ is in $W_{loc}^{1,p}(\Omega)$ and 
\begin{equation}\label{regularityestimatebrezis}
\|u\|_{W^{1,p}(K)}\leq C \|u\|_{W^{1,1}(\Omega)}
\end{equation}
for any compact subset $K\subset \Omega$,
where $C$ depends on $n$, $p$, $K$, the ellipticity constant, $\Omega$ and the uniform modulus of continuity of the coefficients, but not on the Dini modulus of continuity.
\end{theorem}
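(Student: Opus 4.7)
My plan is to show that any $W^{1,1}$ weak solution coincides with the solution of the Dirichlet problem with its own boundary trace, constructed via the Poisson kernel of $\mathrm{div}(A\nabla\cdot)$ under the Dini hypothesis, and then to invoke the classical interior $C^1$ estimate for the latter to obtain $W^{1,p}$ bounds for every finite $p$.

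First I would localize. After restricting to a ball $B_{2r}\subset\Omega$ and applying Fubini on the annulus $B_{2r}\setminus B_r$, I would select a radius $r'\in(r,2r)$ for which $u|_{\partial B_{r'}}\in L^1(\partial B_{r'})$ with $\|u\|_{L^1(\partial B_{r'})}\le C r^{-1}\|u\|_{W^{1,1}(B_{2r})}$. The Dini continuity of $A$ yields a Poisson kernel $P(x,y)$ for the divergence operator on $B_{r'}$, together with pointwise bounds on $P$ and $\nabla_x P$ on compact subsets of the interior, so the solution $v(x)=\int_{\partial B_{r'}} P(x,y)\,u(y)\,d\sigma(y)$ satisfies
\begin{equation*}
\|v\|_{C^1(\overline{B_{r/2}})}\le C\,\|u\|_{L^1(\partial B_{r'})}\le C\,\|u\|_{W^{1,1}(B_{2r})},
\end{equation*}
which already delivers the required $W^{1,p}(B_{r/2})$ bound for $v$.

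The core step is to show that $u=v$ in $B_{r'}$. Setting $w=u-v\in W^{1,1}(B_{r'})$ with zero trace, $w$ satisfies $\mathrm{div}(A\nabla w)=0$ distributionally. For an arbitrary $g\in C^\infty_c(B_{r'})$ I would let $\psi$ solve $\mathrm{div}(A\nabla \psi)=g$ in $B_{r'}$ with $\psi=0$ on $\partial B_{r'}$; the Dini--Schauder theory for divergence equations with smooth data gives $\psi\in C^1(\overline{B_{r'}})$, hence $\nabla\psi\in L^\infty$ and $|\psi|\le C\,\mathrm{dist}(\cdot,\partial B_{r'})$. Multiplying $\psi$ by a cutoff $\eta_\e$ supported at distance $>\e$ from $\partial B_{r'}$ gives a legitimate $C^\infty_c$ test function for the equation satisfied by $w$; passing to the limit $\e\to 0$ via dominated convergence (the bad commutator $\psi\nabla\eta_\e$ is uniformly bounded and vanishes a.e., and the zero trace of $w$ kills any residual boundary contribution) yields $\int g\,w\,dx=0$, so $w\equiv 0$.

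The main obstacle is exactly this uniqueness step at the $L^1$ endpoint: the usual energy method is unavailable, and the duality substitute requires a dual test function with \emph{quantitative} $W^{1,\infty}$ control up to the boundary, which is precisely what the Dini condition furnishes and what mere continuity of $A$ fails to deliver; this is in harmony with the sharpness phenomena highlighted in the rest of the note. Tracking the constants through the Poisson representation and through the $W^{1,\infty}$ bound for the dual problem would then recover the stated dependence of $C$ on the uniform modulus of continuity of the coefficients rather than on the Dini modulus.
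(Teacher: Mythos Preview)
This theorem is not proved in the paper: it is quoted as a result of Brezis (proofs in \cite{Ancona}) to motivate the paper's own non-divergence analogue, Theorem~\ref{Th:improved_regularity}. There is thus no proof here to compare against directly, but it is natural to measure your proposal against the method of Section~\ref{proofofregularity}. That method is pure duality: one pairs derivatives of $u$ with a test function $\varphi$, solves an adjoint problem for $v$, and controls the pairing using only the bound $\|v\|_{L^{q'}}\le C\|\varphi\|_{L^{q'}}$ from Lemma~\ref{Lemma_adjointcalderon}, which requires merely the uniform continuity of $A$. The Dini hypothesis enters \emph{only qualitatively}, through Lemma~\ref{continuityofadjointsolution}, to justify passing a limit; it never touches the constant. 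Your Poisson-representation approach is structurally different and does not achieve this separation.

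That is where the genuine gap lies. Your quantitative estimate $\|v\|_{C^1(\overline{B_{r/2}})}\le C\|u\|_{L^1(\partial B_{r'})}$ rests on pointwise bounds for $P$ and $\nabla_x P$, and these bounds depend on the Dini modulus of $A$; the $W^{1,\infty}$ Schauder-type estimate for the dual function $\psi$ in your uniqueness step likewise depends on the Dini modulus. Your final sentence asserts that ``tracking the constants'' recovers dependence only on the uniform modulus, but no mechanism is offered, and in your scheme the Dini data enter the estimate directly rather than as a purely qualitative hypothesis. A repair is available: use your identification $u=v$ only to conclude qualitatively that $u\in W^{1,2}_{\mathrm{loc}}$, and then invoke the standard interior $W^{1,p}$ theory for divergence operators with continuous coefficients to get the quantitative bound; but as written the proposal does not deliver the stated independence from the Dini modulus. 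A smaller technical issue: you claim $w=u-v\in W^{1,1}(B_{r'})$, yet the Poisson extension of an $L^1$ boundary datum need not lie in $W^{1,1}$ up to the boundary, so the dominated-convergence step in your cutoff argument is not justified without choosing $r'$ more carefully (e.g.\ so that $u|_{\partial B_{r'}}\in W^{1,1}(\partial B_{r'})$ via Fubini).
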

The independence of the constant in \eqref{regularityestimatebrezis} with respect to the Dini modulus of continuity by no means implies that this result is true when the coefficients are merely continuous in $\overline{\Omega}$: a counterexample to such assertion is given in \cite{mazya}.

In the context of non-divergence form elliptic equations, the main result proved in this note is the following.
\begin{theorem} \label{Th:improved_regularity} Assume that the coefficients of $\Lc$ are Dini continuous in $\overline{\Omega}$ and let $u$ in $W^{2,1}(\Omega)$ satisfy $\Lc u=f$, a.e. in $\Omega$ with $f$ in $L^{p}(\Omega)$, for some $1<p<\infty$. Then $u$ is in $W_{loc}^{2,p}(\Omega)$ and
$$\|u\|_{W^{2,p}(K)}\leq C \left[\|u\|_{W^{2,1}(\Omega)}+\|f\|_{L^p(\Omega)}\right],$$
for any compact subset $K\subset \Omega$, where $C$ depends on $n$, $p$, $K$, $\lambda$, $\Omega$ and the uniform modulus of continuity of the coefficients, but not on the Dini modulus of continuity.
\end{theorem}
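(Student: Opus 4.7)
My plan is to establish an \emph{a priori} $W^{2,p}$ estimate under Dini continuity and then transfer it to $W^{2,1}$ solutions. Since Lemma \ref{mejorafacil} already handles the jump from $W^{2,q}$ to $W^{2,p}$ whenever $1<q<p$, the heart of the matter is to upgrade from $W^{2,1}$ to $W^{2,q}$ for some $q>1$, and this upgrade is precisely where the Dini condition must enter.

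\textbf{Step 1 (Localization).} First I would use a cutoff argument to reduce to the following: given $v \in W^{2,1}_0(B_R)$ satisfying $\Lc v = g \in L^p(B_R)$, show $v \in W^{2,p}(B_R)$ with the corresponding bound. The cutoff produces lower-order error terms involving $u$ and $\nabla u$, which by Sobolev embedding of $u\in W^{2,1}$ lie in $L^{q_0}$ for some $q_0>1$ depending only on $n$.

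\textbf{Step 2 (A priori estimate).} Next I would establish the classical a priori bound: for $1<q<\infty$ and smooth $w$,
$$\|D^2 w\|_{L^q(B_{R/2})} \leq C\bigl(\|\Lc w\|_{L^q(B_R)} + \|w\|_{L^q(B_R)}\bigr),$$
with $C$ depending on $n,q,R,\lambda$ and the Dini modulus $\theta$. The standard argument freezes coefficients at $x_0$, writing $\Lc = \Lc_0 + (\Lc-\Lc_0)$ with $\Lc_0 = \mathrm{tr}(A(x_0)D^2)$. On a small ball $B_r(x_0)$ the constant-coefficient Calder\'on--Zygmund estimate controls $D^2 w$ by $\Lc_0 w$, and the perturbation term is bounded by $\|(\Lc - \Lc_0) w\|_{L^q(B_r)} \leq \theta(r) \|D^2 w\|_{L^q(B_r)}$, which can be absorbed by taking $r$ small enough that $C_q \theta(r) < 1/2$. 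A covering and a standard concentric-ball iteration then complete the estimate. Combined with the method of continuity, this yields existence of a (unique) $\bar v \in W^{2,p}_0(B_R)$ solving $\Lc \bar v = g$ with the $W^{2,p}$ bound.

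\textbf{Step 3 (Identification).} The hard part will be to show that our $W^{2,1}$ solution $v$ coincides with $\bar v$. Setting $w = v - \bar v \in W^{2,1}_0(B_R)$, one has $\Lc w = 0$, and the problem is to conclude $w=0$. This uniqueness fails for merely continuous coefficients, so the Dini condition must be used genuinely here. The route I have in mind is a duality argument via the Green's function $G$ for $\Lc$ in $B_R$: under Dini continuity, $D_x^2 G(\cdot,y)$ is a Calder\'on--Zygmund kernel with H\"ormander-type oscillation bounds, and approximation extends the representation $w(x) = \int G(x,y)\,\Lc w(y)\,dy$ to $w \in W^{2,1}_0$, forcing $w=0$. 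Once the identification $v=\bar v$ is in hand, the $W^{2,q_0}$ regularity from Step 2 combined with Lemma \ref{mejorafacil} upgrades this to $W^{2,p}$, completing the proof.
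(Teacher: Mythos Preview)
Your overall strategy is sound and close in spirit to the paper's, but Step~3 as written has a real gap. The pointwise representation $w(x)=\int G(x,y)\,\Lc w(y)\,dy$ cannot be extended to $w\in W^{2,1}_0$ by approximation in $W^{2,1}$, because $G(x,\cdot)\notin L^\infty$: it blows up like $|x-y|^{2-n}$ near the diagonal, so $L^1$-convergence of $\Lc w_\epsilon\to 0$ is not enough to pass to the limit in the right-hand side. The Calder\'on--Zygmund/H\"ormander bounds you invoke on $D_x^2 G$ are beside the point here; what would be needed is an $L^\infty$ bound on $G(x,\cdot)$ itself, and that simply fails.

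The paper repairs exactly this point, and this is where the Dini condition genuinely enters. Rather than using the Green's function pointwise, one tests against a smooth $\zeta$ (in the paper, $\zeta=\partial_{kl}\varphi$ with $\varphi\in C_0^\infty$) and works with the \emph{adjoint solution} $v\in L^{q'}$ of $\Lc^* v=\zeta$ in a ball, with zero boundary data. The key lemma (Lemma~\ref{continuityofadjointsolution}) shows that under Dini continuity this $v$ is \emph{continuous}, hence bounded, on any interior compact set; the proof is a Caffarelli-type iteration comparing $v$ to harmonic approximants at all scales, where the Dini integrability of $\theta$ makes the errors summable. Boundedness of $v$ on the support of $\eta$ is precisely what lets one pass to the limit in $\int v\,\Lc(u_\epsilon\eta)\,dx$ under $L^1$-convergence. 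In your language this would give the uniqueness in $W^{2,1}_0$; the paper instead bypasses the existence/uniqueness split altogether and directly estimates $\int\partial_{kl}(u\eta)\,\varphi\,dx=\int v\,\Lc(u\eta)\,dx$ by H\"older and Sobolev (with $q=\min\{n/(n-1),p\}$ so that $\nabla u\in L^q$ by Sobolev embedding of $W^{2,1}$). One further point: your Step~2 a~priori estimate needs only uniform continuity of $A$, not Dini, so its constant should not depend on the Dini modulus. This matters for matching the theorem's stated dependence, and it is exactly why the paper stresses that Dini is used only \emph{qualitatively} (to get $v\in L^\infty_{loc}$), while the quantitative constant comes from $\|v\|_{L^{q'}}\le C\|\varphi\|_{L^{q'}}$, which holds under mere continuity.
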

Similarly to the case of divergence form elliptic equations, the Dini condition on $A$ is the optimal to derive such a result. Here we give a counterexample inspired by \cite[Section 3]{Escauriaza94}, showing that Theorem \ref{Th:improved_regularity} is false when the coefficients of $\Lc$ are not Dini continuous.
\begin{theorem} \label{contraejemplo1} There is an operator $\Lc$ with continuous coefficients in $\overline{B}_1$, which are not Dini continuous at $x=0$, and a solution $u$ in $W^{2,1}(B_1)\cap W^{1,1}_0(B_1)$ of $\Lc u=0$ such that $u$ is not in $W^{2,p}(B_{\frac 12})$, for any $p>1$.
\end{theorem}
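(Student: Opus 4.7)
The approach is to construct the example by reverse engineering: first fix a function $u$ on $B_{1}$ whose second derivatives have the borderline behavior $|D^{2}u(x)|\asymp |x|^{-n}(\log 1/|x|)^{-\alpha}$ near the origin, with $\alpha>1$, and then design the operator $\Lc$ tailored to $u$ so that $\Lc u=0$, while its coefficients are continuous on $\overline{B}_{1}$ but just fail the Dini condition at the origin.

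A natural candidate is the radial function
\[
u(x)=|x|^{2-n}\phi(|x|),\qquad \phi(r)=(\log(M/r))^{-\alpha},
\]
for a large $M>1$ and $\alpha>1$, extended $C^{2}$-smoothly on the annulus $\{r_{0}\leq|x|\leq 1\}$ in such a way that $u$ is convex with $u'<0$ throughout $(0,1)$ and vanishes on $\partial B_{1}$ (in $n=2$ the factor $|x|^{2-n}\equiv 1$ and the pattern reduces to $u=\phi(|x|)$). Changing to polar coordinates,
\[
\int_{B_{1/2}}|D^{2}u|^{p}\,dx\asymp \int_{0}^{1/2}(\log 1/r)^{-\alpha p}\,\frac{dr}{r},
\]
which is finite for $p=1$ (since $\alpha>1$) but infinite for every $p>1$, because the logarithm cannot tame the non-integrable $r^{n-1-np}=r^{-1-n(p-1)}$. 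Thus $u\in W^{2,1}(B_{1})\cap W^{1,1}_{0}(B_{1})$ but $u\notin W^{2,p}(B_{1/2})$ for any $p>1$.

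For the operator I look at a rank-one radial perturbation of the identity,
\[
a_{ij}(x)=\delta_{ij}+c(|x|)\,\frac{x_{i}x_{j}}{|x|^{2}},
\]
and determine $c(r)$ by requiring $\Lc u=0$. Since $u$ is radial, $\Lc u=\Delta u+c(r)\,u''(r)$, so
\[
c(r)=-\frac{\Delta u(r)}{u''(r)}=-1-\frac{(n-1)u'(r)}{r\,u''(r)}.
\]
A direct computation with $u=r^{2-n}\phi$, $\phi=(\log(M/r))^{-\alpha}$, shows that $\Delta u$ and $u''$ are both of order $r^{-n}(\log 1/r)^{-\alpha}$, but a delicate cancellation of leading logarithmic powers yields
\[
c(r)\sim \frac{C_{n,\alpha}}{\log(1/r)}\quad \text{as } r\to 0,
\]
with an explicit constant $C_{n,\alpha}>0$ (for instance $C_{2,\alpha}=\alpha+1$). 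Outside $B_{r_{0}}$, the smooth extension is chosen so that $u'<0$ and $u''>0$, whence the same formula gives a continuous, bounded $c(r)$ with $c>-1$, so $a_{ij}$ is uniformly elliptic on $\overline{B}_{1}$. Moreover $c(0)=0$, which kills the direction-dependent factor $x_{i}x_{j}/|x|^{2}$ at the origin and makes $a_{ij}$ continuous on $\overline{B}_{1}$. Its modulus of continuity at $0$ satisfies $\theta(r)\asymp|c(r)|\asymp 1/\log(1/r)$, and
\[
\int_{0}^{1/2}\frac{\theta(r)}{r}\,dr\asymp \int_{0}^{1/2}\frac{dr}{r\log(1/r)}=+\infty,
\]
so the coefficients are not Dini at $0$.

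The main obstacle is the precise balance between the decay rate of $\phi$ and the resulting rate of $c$. A faster decay of $\phi$ (e.g.\ replacing $(\log)^{-\alpha}$ by an iterated logarithm) would upgrade $c$ to a Dini modulus and then Theorem \ref{Th:improved_regularity} would force $u\in W^{2,p}$ for every $p>1$, contradicting the construction; a slower decay would make $c$ not tend to $0$ and destroy the continuity of $a_{ij}$ at the origin. The choice $\phi=(\log)^{-\alpha}$ with $\alpha>1$ is exactly what saturates this threshold. The remaining steps---arranging the $C^{2}$-matching at $r_{0}$, checking positivity of $u''$ and sign of $u'$ on $[r_{0},1]$, and verifying the boundary trace---are technical but routine.
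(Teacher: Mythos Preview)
Your approach is essentially the same as the paper's: both use the rank-one perturbation $a_{ij}=\delta_{ij}+c(r)\,x_ix_j/r^2$, a radial $u$ with $|D^2u|\asymp r^{-n}(\log 1/r)^{-\gamma}$, and solve $\Lc u=0$ for $c$ to find $c(r)\asymp 1/\log(1/r)$, hence continuous but not Dini at the origin. The paper's version is cleaner in one respect: instead of $u=r^{2-n}(\log(M/r))^{-\alpha}$ plus a $C^2$ extension near the boundary, it takes
\[
u(r)=\int_r^1 t^{1-n}\Big(\log\tfrac{R}{t}\Big)^{-\gamma}\,dt,
\]
which already vanishes at $r=1$, gives $u'=-r^{1-n}(\log R/r)^{-\gamma}$ directly, and produces the explicit closed form $c(r)=\gamma/[(n-1)\log(R/r)-\gamma]$ valid on all of $(0,1)$; this sidesteps your matching step and the small inconsistency in your $n=2$ discussion (there $u'>0$ and $u''<0$ near $0$, not the reverse).
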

Concerning the other end-point in the scale of $L^p$ spaces, we recall that the singular integrals theory \cite[Chapter IV]{Stein} allows to prove that weak solutions \cite[Chapter 8]{GilbargTrudinger} to $\Delta u=f$ in $B_2$ have generalized second order derivatives in $BMO(B_1)$ when $f\in L^\infty(B_2)$. Moreover, the Laplace operator can be perturbed in order to obtain similar results for elliptic operators \eqref{elliptic_operator} with  Dini continuous coefficients \cite{changli} or with $A$ verifying
\begin{equation}\label{E: condicion1}
|A(x)-A(y)|\le C/[1+|\log{|x-y|}|],
\end{equation}
for some $C>0$ sufficiently small \cite[Theorem A, ii and Corollary 4.1]{CaffarelliHuang}.

As far as we know, there are no counterexamples in the literature showing that mere continuity of the coefficients is not enough to prove that the second derivatives of solutions of elliptic equations do not belong to BMO in general. The next counterexample, which is a modification of \cite[Proposition 1.6]{mazya}, fills this gap.
\begin{theorem}\label{contraejemplo2} There exists an operator $\Lc$ with continuous coefficients in $\overline{B}_1$, which are not Dini continuous at $x=0$, and a solution $u$ in $W^{2,p}(B_1)\cap W^{1,p}_0(B_1)$ of $\Lc u=0$, $1<p<\infty$,  such that $D^2 u$ is not in $BMO(B_{\frac 12})$.
\end{theorem}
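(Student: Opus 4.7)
The strategy is to follow the scheme of \cite[Proposition 1.6]{mazya} for divergence-form operators, adapted to the non-divergence setting. We construct $u$ as a sum $u=\sum_k u_k$ of rescaled smooth bumps supported on pairwise disjoint balls $B_k=B(x_k,r_k)\subset B_{1/2}$ accumulating at the origin, in such a way that each bump individually contributes a definite BMO oscillation to $D^2u$ on $B_k$, while the summability of $\sum_k \|D^2u_k\|_{L^p}^p$ yields $u\in W^{2,p}(B_1)\cap W^{1,p}_0(B_1)$ for every $1<p<\infty$. The coefficients $A(x)$ are built in parallel to annihilate each bump and to coincide with the identity outside $\bigcup_k B_k$.

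Concretely, I would first produce a family of smooth profiles $\varphi_k\in C^\infty_c(B_1)$ together with continuous symmetric uniformly elliptic matrix fields $A^{(k)}$ on $\overline{B_1}$ satisfying $\operatorname{tr}(A^{(k)}D^2\varphi_k)\equiv 0$ in $B_1$, $A^{(k)}\equiv I$ near $\partial B_1$, $\|A^{(k)}-I\|_{L^\infty(B_1)}\le \omega_k$ for a sequence $\omega_k\downarrow 0$, and a uniform lower bound $\inf_{C\in\R^{n\times n}}\fint_{B_1}|D^2\varphi_k-C|\,dy\ge c_0>0$. A workable construction is to take $\varphi_k=\psi_0+\delta_k\xi_k$, where $\psi_0=\chi\,x_1x_2$ for a smooth cutoff $\chi$: the harmonic polynomial $x_1x_2$ is annihilated by $A=I$, so the only obstruction is the cutoff defect $\Delta\psi_0$, which can be absorbed into an $O(\omega_k)$ perturbation of $A$ away from $I$ after introducing a small correction $\xi_k$. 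With radii $r_k\downarrow 0$ and amplitudes $M_k\uparrow\infty$ chosen so that $\sum_k M_k^p r_k^n<\infty$ for every $p<\infty$ (e.g.\ $r_k=2^{-k^2}$, $M_k=k$), set
\begin{equation*}
u(x)=\sum_k M_k r_k^2 \,\varphi_k\!\left(\tfrac{x-x_k}{r_k}\right),\qquad A(x)=\begin{cases}A^{(k)}\!\left(\tfrac{x-x_k}{r_k}\right)&\text{if }x\in B_k,\\ I&\text{if }x\in B_1\setminus\bigcup_k B_k.\end{cases}
\end{equation*}
One then verifies: (i) $\Lc u=0$ a.e., since $\operatorname{tr}(A\,D^2u)=M_k\operatorname{tr}(A^{(k)}D^2\varphi_k)=0$ in each $B_k$ while $u\equiv 0$ elsewhere; (ii) $u\in W^{2,p}(B_1)\cap W^{1,p}_0(B_1)$ by disjointness of supports and the summability of $\sum M_k^p r_k^n$; (iii) $D^2u\notin BMO(B_{1/2})$ because, by scale invariance, $\inf_C\fint_{B_k}|D^2u-C|\,dx\ge c_0 M_k\to\infty$; (iv) $A$ is continuous on $\overline{B_1}\setminus\{0\}$ since $A^{(k)}\equiv I$ on $\partial B_k$, and continuous at $0$ provided $\omega_k\to 0$, while choosing $\omega_k\sim 1/\log\log(1/r_k)$ ensures that the induced modulus of continuity of $A$ at the origin violates the Dini condition $\int_0^1 \theta(t)/t\,dt<\infty$.

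The main obstacle is the coupled construction of the $\varphi_k$ and $A^{(k)}$ in the first step: the lower bound $c_0$ on the BMO oscillation of $D^2\varphi_k$ must persist independently of $k$ even as we force $A^{(k)}\to I$. This coupling is what makes the transplant from the divergence-form argument of \cite{mazya} non-trivial, because a compactly supported function cannot be a nontrivial solution of a constant-coefficient uniformly elliptic equation. The key observation that unlocks it is that $A=I$ annihilates harmonic functions, so a cutoff of the harmonic polynomial $x_1x_2$ has a Hessian of unit size with a definite BMO oscillation while requiring only an $O(\omega_k)$ deviation of $A$ from $I$ to absorb the cutoff's non-harmonic defect.
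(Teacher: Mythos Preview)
Your approach has a fatal gap. You correctly observe that a nontrivial compactly supported function cannot solve a \emph{constant}-coefficient elliptic equation, but the same obstruction persists for variable coefficients: if $\varphi_k\in C^\infty_c(B_1)$ and $A^{(k)}$ is any uniformly elliptic (even merely measurable) symmetric matrix field with $\text{tr}\bigl(A^{(k)}D^2\varphi_k\bigr)=0$ in $B_1$, then the weak maximum principle applied to $\pm\varphi_k$ forces $\varphi_k\equiv 0$. No correction $\xi_k$ or perturbation $A^{(k)}-I$ can circumvent this; the obstruction is ellipticity itself, not constancy of the coefficients. Hence the profiles you need in step one simply do not exist, and the ``key observation that unlocks it'' is based on a misdiagnosis. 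This is precisely where the transplant from the divergence-form construction of \cite{mazya} breaks down: there the pathological solutions live only in $W^{1,1}\setminus W^{1,2}$ and so evade the variational maximum principle, whereas here your bumps are $C^\infty$ and your target space is $W^{2,p}$ with $p>1$, which is squarely in the regime where the classical maximum principle applies.

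The paper proceeds entirely differently, with a single explicit global solution rather than a sum of localized bumps. One sets $u(x)=x_1x_2\bigl(\log\tfrac{R}{|x|}\bigr)^{2}$ and takes $A(x)=I+\alpha(|x|)\,\tfrac{x}{|x|}\otimes\tfrac{x}{|x|}$; computing $\Lc_\alpha u$ and solving the resulting algebraic relation for $\alpha(r)$ gives a continuous, uniformly elliptic (for $R$ large), non-Dini coefficient with $\Lc_\alpha u=0$ in $B_1$. The failure of $BMO$ is read off directly from the pointwise bound $\partial_{12}u\ge\tfrac12\bigl(\log\tfrac{R}{|x|}\bigr)^{2}$ together with the John--Nirenberg inequality. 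The singularity of $D^2u$ is concentrated at the origin and $u$ does not vanish in any neighborhood of $\partial B_1$, so there is no conflict with the maximum principle.
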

The counterexample in Theorem \ref{contraejemplo2} is sharp because its coefficient matrix $A$ verifies \eqref{E: condicion1} for $x,y$ in $B_1$, for some fixed $C>0$.

The main ingredients in the proof of Theorem \ref{Th:improved_regularity} are the Sobolev inequality and the boundedness of solutions to equations involving the formal adjoint operator $\Lc^*$  given by
$$\Lc^*v=\sum_{i,j=1}^n \p_{ij}(a^{ij}v).$$
In order to make sense of the solutions associated to the operator $\Lc^*$ when the coefficients of $\Lc$ are only continuous we must consider distributional or weak solutions to the \textit{adjoint} equation. For our purposes, we  need to deal with boundary value problems of the form
\begin{equation}\label{adjoint_problem} 
\begin{cases}
\Lc^*w=div^2 \Phi+\eta,\ &\text{in }\Omega,\\
w=\psi+\frac{\Phi\nu\cdot\nu}{A\nu\cdot\nu},\ &\text{on }\p\Omega,
\end{cases}
\end{equation}
where $\Phi=(\varphi^{kl})_{k,l=1}^n$, $div^2\Phi=\sum_{k,l=1}\p_{kl}\varphi^{kl}$, with 
\begin{equation}\label{E: condicones}
\Phi\ \text{in}\  L^p(\Om),\ \eta\ \text{in}\ L^{p}(\Om),\ \psi\ \text{in}\ L^p(\p\Om, d\sigma),\ 1<p<\infty.
\end{equation}
 \begin{definition} Let $\Omega\subset \R^n$ be a bounded $C^{1,1}$ domain with unit exterior normal vector $\nu=(\nu_1,\ldots,\nu_n)$, $\Phi$, $\psi$ and $\eta$ verify \eqref{E: condicones}, let $\Lc$ be as in \eqref{elliptic_operator}, $1<p<\infty$ and $\frac{1}{p}+\frac{1}{p'}=1$.
We say that $w$ in $L^p(\Omega)$ is an \textit{adjoint solution} of \eqref{adjoint_problem} if $w$ satisfies
\begin{equation} \label{weak_formulation}
\begin{split}
\int_{\Omega} w\,\Lc u\,dy&=\int_{\Omega}tr(\Phi D^2 u)\, dy+\int_{\Omega}\eta u\, dy+\int_{\p\Omega} \psi A\nabla u\cdot \nu\, d\sigma(y),
\end{split}
\end{equation}
for any $u$ in $W^{2,p'}(\Omega)\cap W^{1,p'}_0(\Omega)$.
\end{definition}
Later we shall explain why this definition makes sense. At first, the boundary conditions in \eqref{adjoint_problem} may look strange. However, if we formally multiply \eqref{adjoint_problem} by a test function $u$ in $C^\infty(\overline\Omega)$ with $u=0$ on $\partial\Omega$, assume that $v$ is in $C^\infty(\overline\Omega)$ and integrate by parts, taking into account that $\nabla u=(\nabla u\cdot \nu)\nu$ on $\p\Omega$, we arrive at \eqref{weak_formulation}.

We will also consider \textit{local} adjoint solutions of $$\Lc^*w=div^2 \Phi+\eta\ \ \text{ in }\Omega,$$ i.e., solutions which do not satisfy any specified boundary condition. Such local solutions are those in $L^p_{loc}\left(\Omega\right)$ that verify \eqref{weak_formulation}, when $u$ is in $W_0^{2,p'}(\Omega)$; thus, the boundary integrals in \eqref{weak_formulation} are omitted.

This kind of adjoint solutions have been already studied in the literature. For instance, in \cite{Sjogren73, Bauman84,FabesStroock,FGMS, Escauriaza00,Maz'yaMcOwen} solutions of \eqref{adjoint_problem} with $\Phi=0$ are studied under low regularity assumptions on either the coefficients of $\Lc$ or the boundary of the domain. Moreover,  when the data and the boundary of the domain involved in \eqref{adjoint_problem} are smooth, the weak formulation \eqref{weak_formulation} can be recasted in such a way that the regularity theory in \cite{LionsMagenes} or \cite{Necas} can be used to prove that $w$ is smooth and solves \eqref{adjoint_problem} in a classical sense.

For our purposes we need to prove the existence and uniqueness of such adjoint solutions.
\begin{lemma} \label{Lemma_adjointcalderon} Let $1<p<\infty$ and assume that \eqref{E: condicones} holds. Then, there exists a unique adjoint solution $w$ in $L^{p}(\Om)$ of \eqref{adjoint_problem}. Moreover, the following estimate holds
\begin{equation} \label{desicalderonadjoint}
\|w\|_{L^p(\Om)}\leq C\left[\|\Phi\|_{L^p(\Om)}
+\|\eta\|_{L^{p}(\Om)}+\|\psi\|_{L^p(\p \Om)}\right],
\end{equation}
where $C$ depends on $\Omega,p,n,\lambda$ and the continuity of $A$.
\end{lemma}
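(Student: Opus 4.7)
The plan is to exploit duality with the Calder\'on--Zygmund $W^{2,p'}$ theory for the operator $\Lc$. Set $X = W^{2,p'}(\Omega)\cap W^{1,p'}_0(\Omega)$ with its natural norm. Since the coefficients of $\Lc$ are continuous on $\overline{\Omega}$ and $\Omega$ is $C^{1,1}$, the classical $L^{p'}$ theory for non-divergence elliptic equations (see, e.g., \cite[Theorem 9.15 and Lemma 9.17]{GilbargTrudinger}) gives that the Dirichlet map
\[
\Lc:X\longrightarrow L^{p'}(\Omega),\qquad u\mapsto \Lc u,
\]
is a Banach space isomorphism, with an inverse bound $\|u\|_{W^{2,p'}(\Omega)}\le C\|\Lc u\|_{L^{p'}(\Omega)}$ whose constant depends on $n$, $p$, $\Omega$, $\lambda$, and the modulus of continuity of $A$.

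Next I would argue that the right-hand side of \eqref{weak_formulation} defines a bounded linear functional on $X$. For the interior pieces, H\"older's inequality gives
\[
\Bigl|\int_\Omega \operatorname{tr}(\Phi D^2 u)\,dy\Bigr|+\Bigl|\int_\Omega \eta u\,dy\Bigr|\le \bigl(\|\Phi\|_{L^p(\Omega)}+\|\eta\|_{L^p(\Omega)}\bigr)\|u\|_{W^{2,p'}(\Omega)}.
\]
For the boundary integral, since $\Omega$ is $C^{1,1}$ and $u\in W^{2,p'}(\Omega)$, the trace theorem yields $\nabla u|_{\partial\Omega}\in W^{1-1/p',p'}(\partial\Omega)\hookrightarrow L^{p'}(\partial\Omega,d\sigma)$; together with the boundedness of $A$ this gives
\[
\Bigl|\int_{\partial\Omega}\psi A\nabla u\cdot\nu\,d\sigma\Bigr|\le C\|\psi\|_{L^p(\partial\Omega)}\|u\|_{W^{2,p'}(\Omega)}.
\]
Combining these estimates, the map $u\mapsto T(u)$ given by the right-hand side of \eqref{weak_formulation} lies in $X^*$ with norm controlled by the bracketed quantity in \eqref{desicalderonadjoint}.

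Via the isomorphism $\Lc:X\to L^{p'}(\Omega)$, the functional $T$ induces $\widetilde{T}\in L^{p'}(\Omega)^*$ by $\widetilde{T}(f)=T(\Lc^{-1}f)$, with $\|\widetilde{T}\|\le C\|T\|_{X^*}$. The Riesz representation theorem then produces a unique $w\in L^p(\Omega)$ such that $\widetilde{T}(f)=\int_\Omega wf\,dy$ for every $f\in L^{p'}(\Omega)$; unwinding this gives $\int_\Omega w\Lc u\,dy=T(u)$ for every $u\in X$, i.e.\ $w$ is the sought adjoint solution, and $\|w\|_{L^p(\Omega)}=\|\widetilde{T}\|$ yields \eqref{desicalderonadjoint}. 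Uniqueness is immediate: if $w\in L^p(\Omega)$ satisfies \eqref{weak_formulation} with $\Phi=0$, $\eta=0$, $\psi=0$, then $\int_\Omega w\Lc u\,dy=0$ for every $u\in X$, and the surjectivity of $\Lc:X\to L^{p'}(\Omega)$ forces $\int_\Omega wf\,dy=0$ for all $f\in L^{p'}(\Omega)$, hence $w=0$.

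The main obstacle is securing the isomorphism $\Lc:X\to L^{p'}(\Omega)$ under the mere continuity of the coefficients; once this $W^{2,p'}$ solvability of the classical Dirichlet problem is invoked, the remainder of the argument is straightforward duality. A minor technical point is the boundary term, where one must verify that the trace of $A\nabla u\cdot\nu$ makes sense and is controlled by $\|u\|_{W^{2,p'}(\Omega)}$; the $C^{1,1}$ regularity of $\partial\Omega$ is what makes this work and justifies the formulation \eqref{adjoint_problem} together with the heuristic integration by parts mentioned after the definition.
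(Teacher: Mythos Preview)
Your argument is correct and is essentially the same transposition/duality proof given in the paper: both rely on the $W^{2,p'}$ solvability of $\Lc$ from \cite[Theorem 9.15, Lemma 9.17]{GilbargTrudinger}, bound the right-hand side of \eqref{weak_formulation} via H\"older and the trace inequality, and then invoke the Riesz representation theorem on $L^{p'}(\Omega)$. The only cosmetic difference is that you first define the functional on $X$ and then pull it back through $\Lc^{-1}$, whereas the paper defines it directly on $L^{p'}(\Omega)$ by $T(f)=\text{(RHS of \eqref{weak_formulation}) with }u=\Lc^{-1}f$; these are the same map.
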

This result follows from the so-called \textit{transposition} or \text{duality} method \cite{LionsMagenes,Necas}, which relies on the existence and uniqueness of $W^{2,p'}\cap W^{1,p'}_0(\Omega)$ solutions to $\Lc u=f$.

Finally, the proof of Theorem \ref{Th:improved_regularity} requires the boundedness of certain adjoint solutions to problems of the form \eqref{adjoint_problem} with $\Phi=0$. It is at this point where the Dini continuity of the coefficients plays a role. However, and similarly to what it was done in \cite{Brezis}, we only employ the boundedness of these adjoint solutions in a qualitative form, that is, we do not need an specific estimate of the boundedness of those adjoint solutions.

In order to prove the boundedness of the specific adjoint solutions, we employ a perturbative technique based on ideas first stablished in \cite{Caffarelli,CaffarelliPeral} and used in \cite{YYLi16} to prove the continuity of the gradient of solutions to divergence-form second order elliptic systems with Dini continuous coefficients. Accordingly, we do not only prove that those adjoint solutions are bounded but also its continuity.
\begin{lemma} \label{continuityofadjointsolution}
Let $\zeta\in C_0^\infty(B_3)$, $1<p<\infty$ and assume that the elliptic operator $\Lc$ has Dini continuous coefficients in $B_4$. Then, if $v$ in $L^{p}(B_4)$ satisfies
\begin{equation*}
\int_{B_4}v\, \Lc u\, dx=\int_{B_4} \zeta u\, dx,\  \text{for any }\ u\in W^{2,p'}(B_4)\cap W^{1,p'}_0(B_4),
\end{equation*}
$v$ is continuous in $\overline {B}_3$.
\end{lemma}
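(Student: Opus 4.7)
The plan is to combine coefficient-freezing with a Campanato-type dyadic iteration. Fix $x_0 \in \overline B_3$ and a radius $r$ small enough that $B_r(x_0)\subset B_4$, and set $\Lc_0 := \mathrm{tr}(A(x_0)D^2)$. The decisive structural point is that, since $A(x_0)$ is a constant symmetric matrix, $\Lc_0^*=\Lc_0$; consequently, by Weyl's lemma for constant-coefficient elliptic operators, every distributional adjoint solution of $\Lc_0^*$ on a ball is a classical solution of $\Lc_0 g = 0$ and is automatically $C^\infty$. This is the reserve of regularity the freezing buys at each scale.

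On $B_r := B_r(x_0)$, I would invoke Lemma \ref{Lemma_adjointcalderon} to produce the adjoint solution $h$ of
\[
\Lc_0^* h = \zeta + \mathrm{div}^2\bigl((A(x_0) - A)\, v\bigr)\ \text{in }B_r,
\]
with data $\psi=0$ on $\partial B_r$, and set $g := v - h$. Subtracting the two weak formulations and using $\Lc_0^* = \Lc_0$, one checks that $g$ is a classical $\Lc_0$-solution in $B_r$. After rescaling $B_r \to B_1$ (the constants in Lemma \ref{Lemma_adjointcalderon} for a constant coefficient operator depend only on $n,p,\lambda$), this gives
\[
\|h\|_{L^p(B_r)} \le C\bigl[\theta(r)\,\|v\|_{L^p(B_r)} + r^{2+n/p}\|\zeta\|_{L^\infty}\bigr],
\]
while the interior derivative estimate for $\Lc_0$-harmonic functions yields, for every $c\in\R$,
\[
\sup_{B_{r/2}}|\nabla g| \le C\, r^{-1-n/p}\,\|g-c\|_{L^p(B_r)}.
\]

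Setting $\omega(r) := \inf_{c\in\R} r^{-n/p}\|v-c\|_{L^p(B_r(x_0))}$, choosing $c_r$ to attain the infimum and decomposing $v - c_r = h + (g - c_r)$, the two estimates above at scale $\rho = \tau r$ for small $\tau \in (0,1/2]$ combine into the one-step inequality
\[
\omega(\tau r) \le C_1\tau\,\omega(r) + C_2(\tau)\bigl[\theta(r) + r^2\bigr],
\]
with $C_1, C_2(\tau)$ independent of $r$ and of $x_0 \in \overline B_3$. Choosing $\tau$ with $C_1\tau \le 1/2$ and iterating from a fixed starting scale $r_0$ produces $\omega(\tau^k r_0) \to 0$ at a rate governed by $\sum_j[\theta(\tau^j r_0) + (\tau^j r_0)^2]$, which is finite thanks to \eqref{integralcondition} and \eqref{doubling}. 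A parallel triangle inequality yields $|c_r - c_{\tau r}| \le C_\tau \omega(r)$, so the averages $c_r(x_0)$ form a Cauchy sequence; their limit $\tilde v(x_0)$ coincides with $v$ almost everywhere, and the uniformity of the rate in $x_0$ together with Campanato's embedding makes $\tilde v$ continuous on $\overline B_3$.

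The principal obstacle I expect is that $\Lc^*$ does not annihilate constants, so the replacement $v \mapsto v - c_r$ does not preserve the adjoint equation; the bound on $h$ therefore involves $\|v\|_{L^p(B_r)}$ rather than the oscillation $\|v - c_r\|_{L^p(B_r)}$. Closing the iteration requires running the Cauchy-type bound on $c_r$ in tandem with the iteration for $\omega$, anchored by the a priori bound $\|v\|_{L^p(B_4)} \le C\|\zeta\|_{L^p}$ that Lemma \ref{Lemma_adjointcalderon} already supplies.
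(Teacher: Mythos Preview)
Your approach is correct and follows the same perturbative philosophy as the paper's proof: freeze coefficients, compare $v$ at each scale to a constant-coefficient ($\Lc_0$-harmonic) function, and iterate dyadically using the Dini condition to sum the errors. The organizational details differ, however, and it is worth spelling out the contrast.

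The paper's decomposition goes the other way: at each scale it solves the \emph{Dirichlet} problem $\Delta h = 0$ in $B_t$ with $h = w$ on $\partial B_t$ (choosing a good radius $t$ via Fubini so that $\|w\|_{L^p(\partial B_t)}$ is controlled), and then bounds the remainder $w-h$ via Lemma~\ref{Lemma_adjointcalderon}. It then \emph{accumulates} the harmonic corrections, tracking $v_\delta - \sum_{j\le k} h_j$ and showing $\sum h_j(0)$ converges. Your decomposition instead solves for the small error piece $h$ with zero boundary data, leaving $g=v-h$ exactly $\Lc_0$-harmonic, and tracks the Campanato oscillation $\omega(r)$. Your route avoids the Fubini step and is closer to textbook Campanato iteration; the paper's route is closer to the Caffarelli--Peral and Li presentations it cites.

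The trade-off is precisely the one you flag: because $\Lc^\ast$ does not annihilate constants, your error bound for $h$ carries $\theta(r)\,r^{-n/p}\|v\|_{L^p(B_r)}$ rather than $\theta(r)\,\omega(r)$, so the one-step inequality as you first write it is not quite closed. The paper's accumulated-correction bookkeeping sidesteps this, but pays for it with the analogous growing term $(I-A_{k+1})\sum_{j\le k}h_j$, controlled in exactly the same way---via $\sum_j \omega(4^{-j}\delta)\lesssim \int_0^\delta \omega(t)/t\,dt$. Your proposed fix (run $|c_{r_{k+1}}-c_{r_k}|\le C\omega(r_k)$ in tandem, sum both recursions, and use that $\sum_k\theta(\tau^k r_0)$ is small for $r_0$ small) does close the loop: one obtains $\sum_k\omega_k<\infty$, hence $\omega_k\to 0$ and $(c_k)$ Cauchy, uniformly in $x_0\in\overline B_3$. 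So the two arguments are genuinely equivalent reorganizations of the same mechanism.
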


The paper is organized as follows: in Section \ref{counterexamples} we give the counterexamples stated in Theorems \ref{contraejemplo1} and \ref{contraejemplo2}; in Section \ref{transposition} we prove Lemma \ref{Lemma_adjointcalderon} using the duality method; in Section \ref{proofofcontinuity} we prove that certain adjoint solutions are continuous and in Section \ref{proofofregularity} we prove Theorem \ref{Th:improved_regularity}.

\section{Counterexamples} \label{counterexamples}
In this section we give two counterexamples. Both of them arise as solutions of uniformly elliptic operators of the form
\begin{equation}\label{counterexample_operator}
\Lc_\alpha u=tr\left[\left(I+\alpha(r) \frac{x}{r}\otimes \frac{x}{r} \right)D^2u \right],
\end{equation}
where $(x\otimes x)_{ij}=x_i x_j$, $r=|x|$, with $\alpha$ is a continuous radial function in $\overline{B_1}$, $\alpha(0)=0$.
\begin{proof}[Proof of Theorem \ref{contraejemplo1}]
If we look for a radial solution $u$ of \eqref{counterexample_operator}, we find that $u$ must satisfy
\begin{equation}\label{ecuacioncontraejemplo}
\Lc_\alpha u=(\alpha(r)+1)u''+\frac{n-1}{r}u'=0.
\end{equation}
We choose $$u(r)=\int_r^1 t^{1-n}\left( \log\frac{R}{t}\right)^{-\gamma} dt,\ \ \gamma>1,$$
with $R>1$ to be chosen.
Then
\begin{equation*}
\begin{split}
u'(r)&=-r^{1-n}\left(\log\frac{R}{r}\right)^{-\gamma}\\
u''(r)&=r^{-n}\left(\log \frac{R}{r}\right)^{-\gamma}\left[n-1-\gamma\left(\log\frac{R}{r}\right)^{-1} \right].
\end{split}
\end{equation*}
Hence, $u\in W^{2,1}(B_1)\cap W^{1,1}_0(B_1)$ but $D^2 u\notin L^p(B_1)$ for any $p>1$, when $\gamma >1$ and $R>1$.
Solving \eqref{ecuacioncontraejemplo} for $\alpha$ we obtain
$$\alpha(r)=\frac{\gamma}{(n-1)\log{\frac{R}{r}}-\gamma},$$
which ensures the uniform ellipticity and the continuity of the coefficients of $\Lc_\alpha$ over $\overline B_1$, when $R$ is sufficiently large. However, $\alpha$ is not Dini continuous at $x=0$.
\end{proof}
\begin{proof}[Proof of Theorem \ref{contraejemplo2}]
Let $\varphi\in C^2([0,1])$, $\alpha\in C([0,1])$ and define 
\begin{equation*}
u(x)=x_1 x_2 \varphi(r).
\end{equation*}
A computation shows that
\begin{equation*}
\Lc_\alpha u=\frac{x_1 x_2}{r^2}\left[(n+3)r\varphi'+r^2\varphi''+\alpha(2\varphi+4r\varphi'+r^2\varphi'') \right].
\end{equation*}
Choosing $\varphi(r)=\left(\log\frac{R}{r}\right)^2$ for some $R>1$ yields
$$ \Lc_\alpha u=\frac{x_1 x_2}{r^2}\left[1+\alpha-(2+n+3\alpha)\log{\frac{R}{r}}+\alpha\left(\log\frac{R}{r}\right)^2 \right],$$
which is identically zero in $B_1(0)$ provided that $$\alpha(r)=\frac{(2+n)\log{\frac{R}{r}}-1}{\left(\log\frac{R}{r}\right)^2-3\log\frac{R}{r}+1},$$
and $R>1$ is taken large enough in order to ensure the uniform ellipticity and the continuity of the coefficients of $\Lc_\alpha$ in $\overline B_1$.
A computation shows that
\begin{equation*}
\p_{12}u\geq \frac12 \left(\log\frac{R}{r}\right)^2\ \text{on}\ \overline B_1,
\end{equation*}
when $R>1$ is large enough. Moreover, for any $c\in\R$ there is $\e=\e(c)$ such that $\left(\log\frac{R}{r}\right)^2\geq 4|c|$ in $B_\e$. Thus
$$\int_{B_{\frac 12}} e^{N|\p_{12} u-c|dx}dx \geq \int_{B_\e} e^{\frac{N}{4}(\log\frac{R}{r})^2}dx=+\infty,\ \text{for any}\ N>0,\, c\in\R.$$
By the John-Nirenberg inequality \cite{JohnNirenberg},  $\p_{12}u$ cannot belong to $BMO(B_1)$. 
\end{proof}

\section{Existence of adjoint solutions} \label{transposition}
We recall the following well known existence result for the Dirichlet problem for non-divergence form elliptic equations \cite[Theorem 9.15, Lemma 9.17]{GilbargTrudinger}.
\begin{lemma} \label{CalderonZygmund}
Let $\Om\subset \R^n$ be a $C^{1,1}$ domain, $f$ be in $L^p(\Om)$ and $1<p<\infty$. Then, there exists a unique $u\in W^{2,p}(\Om)\cap W_0^{1,p}(\Om)$ such that $\Lc u=f$ a.e. in $\Om$. Moreover, there is a constant $C>0$ depending on  $\Omega,p,n,\lambda$ and the modulus of continuity of $A$ such that
\begin{equation}\label{desicalderon}
\|u\|_{W^{2,p}(\Om)}\leq C \|f\|_{L^p(\Om)}.
\end{equation}
\end{lemma}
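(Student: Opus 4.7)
The plan is standard: establish (a) an a priori bound $\|u\|_{W^{2,p}(\Omega)} \le C\|f\|_{L^p(\Omega)}$ for $u\in W^{2,p}(\Omega)\cap W_0^{1,p}(\Omega)$ satisfying $\Lc u=f$ a.e., and (b) pass from this bound to existence via the method of continuity, using the Laplacian as the anchor point. Uniqueness is immediate from the Aleksandrov--Bakelman--Pucci (ABP) maximum principle, which forces $u\equiv 0$ whenever $f\equiv 0$ and $u|_{\p\Omega}=0$.

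For the interior a priori estimate, I would exploit the uniform continuity of $A$ on $\overline{\Omega}$: given $\delta>0$, pick $r_\delta>0$ so that the oscillation of $A$ on every ball $B_{r_\delta}(x_0)\cap\Omega$ is at most $\delta$. Freezing coefficients at $x_0$ and diagonalising $A(x_0)$ by a linear change of variables, the frozen operator is conjugate to $\Delta$, and the classical Calderón--Zygmund estimate (applied after localising $u$ by a cutoff supported in $B_{r_\delta}(x_0)$) gives
$$
\|D^2 u\|_{L^p} \le C_0\|\mathrm{tr}(A(x_0)D^2 u)\|_{L^p} \le C_0\|\Lc u\|_{L^p} + C_0\delta\|D^2 u\|_{L^p}.
$$
Choosing $\delta$ so that $C_0\delta\le 1/2$ absorbs the error term. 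A finite covering of $\overline{\Omega}$ by such balls, combined with the interpolation inequality $\|Du\|_{L^p}\le\e\|D^2 u\|_{L^p}+C_\e\|u\|_{L^p}$ and the ABP bound $\|u\|_{L^\infty(\Omega)}\le C\|f\|_{L^n(\Omega)}$, which controls $\|u\|_{L^p}$ by $\|f\|_{L^p}$, yields the interior estimate.

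The boundary estimate follows the same template: using the $C^{1,1}$ regularity of $\p\Omega$, I would flatten a boundary neighbourhood by a $C^{1,1}$ diffeomorphism, reducing matters to a half-ball on which $u$ vanishes on the flat face. The transformed operator is again uniformly elliptic with continuous coefficients, whose modulus of continuity is controlled by that of $A$ and by the chart regularity, so the freezing/perturbation argument applies, with the constant-coefficient half-space estimate obtained via odd reflection across the flat face. Patching interior and boundary patches with a partition of unity, and once more absorbing the lower-order terms using interpolation and ABP, delivers the global a priori estimate.

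For existence, I would run the method of continuity along $\Lc_t=(1-t)\Delta+t\Lc$, $t\in[0,1]$. Each $\Lc_t$ is uniformly elliptic with a common lower ellipticity bound and continuous coefficients whose modulus is dominated by that of $A$; hence the a priori estimate above holds with a constant uniform in $t$. Since solvability in $W^{2,p}\cap W_0^{1,p}$ for $\Lc_0=\Delta$ is classical, the subset of $t\in[0,1]$ for which the Dirichlet problem is solvable for every $f\in L^p(\Omega)$ is both open and closed, hence equals $[0,1]$. The main difficulty I anticipate is ensuring that the a priori constant depends only on the modulus of continuity of $A$ (and not on any stronger pointwise data): this is precisely what forces the freezing/covering perturbation scheme, rather than a direct singular integral calculation at the variable-coefficient level.
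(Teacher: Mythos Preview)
The paper does not prove this lemma; it is recalled as a known result with a citation to \cite[Theorem 9.15, Lemma 9.17]{GilbargTrudinger}, so there is no ``paper's own proof'' to compare against. Your outline is essentially the standard Gilbarg--Trudinger argument that the paper is invoking: freeze coefficients and use Calder\'on--Zygmund on small balls to get the interior $W^{2,p}$ estimate, flatten the $C^{1,1}$ boundary for the boundary estimate, patch with a partition of unity, and close with the method of continuity from $\Delta$.

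One point deserves care. You write that the ABP bound $\|u\|_{L^\infty}\le C\|f\|_{L^n}$ ``controls $\|u\|_{L^p}$ by $\|f\|_{L^p}$''. This is fine when $p\ge n$ (since $\Omega$ is bounded), but for $1<p<n$ the inequality $\|f\|_{L^n}\le C\|f\|_{L^p}$ is false, so ABP does not directly eliminate the lower-order $\|u\|_{L^p}$ term from the a priori estimate. The usual remedy, which is exactly what \cite[Lemma 9.17]{GilbargTrudinger} does, is a compactness/contradiction argument: if $\|u\|_{W^{2,p}}\le C(\|\Lc u\|_{L^p}+\|u\|_{L^p})$ failed to upgrade to $\|u\|_{W^{2,p}}\le C\|\Lc u\|_{L^p}$, a normalised sequence would converge weakly in $W^{2,p}$ and strongly in $L^p$ to a nontrivial $W^{2,p}\cap W_0^{1,p}$ solution of $\Lc u=0$, contradicting ABP uniqueness. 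With that adjustment your sketch is correct and matches the reference the paper cites.
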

An easy consequence of Lemma \ref{CalderonZygmund} is the existence and uniqueness of adjoint solutions to \eqref{adjoint_problem} stated in Lemma \ref{Lemma_adjointcalderon}. 

\begin{proof}[Proof of Lemma \ref{Lemma_adjointcalderon}] We construct the solution by means of \textit{tranposition}.
If $p'$ is the conjugate exponent of $p$, we define the functional $T:L^{p'}(\Om)\rightarrow \R$ by
\begin{equation}\label{funcional}
T(f)=\int_{\Om}tr\left(\Phi D^2 u \right)\,dx+\int_{\Om} \eta u\,dx+\int_{\p\Om} \psi A\nabla u\cdot \nu\, d\sigma,
\end{equation}
where $u$ in $W^{2,p'}(\Om)\cap W_0^{1,p'}(\Om)$ verifies $\Lc u=f$, a.e. in $\Om$. 
Combining \eqref{desicalderon}, the trace inequality \cite[\S 5.5, Theorem 1]{Evans}, \eqref{funcional} and H\"older's inequality, it is straightforward to check that
$$|T(f)|\leq C\|f\|_{L^{p'}(\Om)} \left[\|\Phi\|_{L^p(\Om)}
+\|\eta\|_{L^{p}(\Om)}+\|\psi\|_{L^p(\p \Om)}\right],$$
where $C=C(A,\Omega,p,n)$.
Hence $T$ is a bounded functional on $L^{p'}(\Om)$ and by the Riesz representation Theorem, there is a unique $w$ in $L^p(\Om)$ such that
\begin{equation} \label{representacion}
T(f)=\int_{\Om} w f \,dx,\  \text{for any}\ f\in L^{p'}(\Om).
\end{equation}
Moreover,
$$\|w\|_{L^p(\Om)}\leq C\left[\|\Phi\|_{L^p(\Om)}
+\|\eta\|_{L^{p}(\Om)}+\|\psi\|_{L^p(\p \Om)} \right].$$
Now, combining \eqref{funcional} and \eqref{representacion}, it is clear that $v$ is the unique adjoint solution to  \eqref{adjoint_problem}.
\end{proof}

\section{Proof of Lemma \ref{continuityofadjointsolution}} \label{proofofcontinuity}
For the proof of Lemma \ref{continuityofadjointsolution} we need first the following Lemma.
\begin{lemma}\label{perturbationlemma}
Let $\Phi\in L^p(B_1)$, $\eta\in L^\infty(B_1)$, $w\in L^p(B_1)$, $1<p<\infty$ and $\Lc$ be an operator like \eqref{elliptic_operator} with continuous coefficients and $A(0)=I$, the identity matrix. Then, if 
\begin{equation*}
\Lc^*w = div^2\Phi+\eta,\  \text{in}\ B_1,
\end{equation*} 
there exists a harmonic function $h$ in $B_{\frac{3}{4}}$ such that
\begin{equation}\label{desigualdelemilla}
\begin{split}
&\|h\|_{L^p(B_{\frac{3}{4}})}\leq M \|w\|_{L^p(B_1)},\\
&\|w-h\|_{L^p(B_{\frac{3}{4}})}\leq M\left[\|\Phi\|_{L^p(B_1)} + \|A-I\|_{L^\infty(B_1)}\|w\|_{L^p(B_1)}+\|\eta\|_{L^{\infty}(B_1)}\right],
\end{split}
\end{equation}
where $M$ depends on $p,n,\lambda$ and the modulus of continuity of $A$.
\end{lemma}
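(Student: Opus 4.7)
The plan is a perturbative decomposition $w = h + \phi$ exploiting the condition $A(0)=I$ to treat $\mathcal L^*$ as a small perturbation of $\Delta^* = \Delta$. The algebraic identity $\mathcal L^* w = \mathrm{div}^2(Aw)$ and $\Delta w = \mathrm{div}^2(Iw)$ (both in the distributional sense) allow the hypothesis to be rewritten as
\begin{equation*}
\Delta w = \mathrm{div}^2 \tilde\Phi + \eta \quad \text{in } B_1,
\end{equation*}
in the local adjoint sense, where $\tilde\Phi := \Phi + (I-A)w$ satisfies $\|\tilde\Phi\|_{L^p(B_1)} \le \|\Phi\|_{L^p(B_1)} + \|A-I\|_{L^\infty(B_1)}\|w\|_{L^p(B_1)}$. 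Concretely, for any $u \in W_0^{2,p'}(B_1)$ the identity $\mathcal L u = \Delta u + \mathrm{tr}((A-I)D^2 u)$ inserted into the local weak formulation for $w$ gives $\int_{B_1} w\,\Delta u = \int_{B_1} \mathrm{tr}(\tilde\Phi D^2 u) + \int_{B_1} \eta u$.

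I would then apply Lemma \ref{Lemma_adjointcalderon} with the operator $\Delta$ on the $C^{1,1}$ intermediate domain $B_{7/8}$ to produce the unique adjoint solution $\phi \in L^p(B_{7/8})$ of
\[
\Delta\phi = \mathrm{div}^2\tilde\Phi + \eta \text{ in } B_{7/8}, \qquad \phi = \tilde\Phi\,\nu\cdot\nu \text{ on } \partial B_{7/8},
\]
i.e.\ problem (\ref{adjoint_problem}) with $\psi \equiv 0$ so that the boundary integral in (\ref{weak_formulation}) is absent. The bound (\ref{desicalderonadjoint}) yields
\[
\|\phi\|_{L^p(B_{7/8})} \le C\bigl[\|\Phi\|_{L^p(B_1)} + \|A-I\|_{L^\infty(B_1)}\|w\|_{L^p(B_1)} + \|\eta\|_{L^\infty(B_1)}\bigr].
\]
Setting $h := w - \phi$, I would verify that $h$ is harmonic in $B_{7/8}$ as follows: for any $u \in C_c^\infty(B_{7/8})$, which simultaneously lies in $W_0^{2,p'}(B_1)$ and in $W^{2,p'}(B_{7/8}) \cap W_0^{1,p'}(B_{7/8})$, subtracting the two weak formulations makes all source terms cancel, leaving $\int_{B_{7/8}} h\,\Delta u = 0$; Weyl's lemma then gives harmonicity of $h$ on $B_{7/8}$, hence on $B_{3/4}$.

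The second inequality in (\ref{desigualdelemilla}) is immediate since $w - h = \phi$ on $B_{3/4} \subset B_{7/8}$, while the first follows from the triangle inequality $\|h\|_{L^p(B_{3/4})} \le \|w\|_{L^p(B_1)} + \|\phi\|_{L^p(B_{7/8})}$ together with the bound on $\phi$ above. The principal obstacle is the harmonicity verification: one must reconcile two distinct weak formulations — the local one for $w$ on $B_1$, whose test functions are compactly supported, and the boundary value one for $\phi$ on $B_{7/8}$, whose test functions vanish only on $\partial B_{7/8}$. Testing against $C_c^\infty(B_{7/8})$ bridges both spaces, and the specific choice $\psi \equiv 0$ in the boundary condition for $\phi$ is precisely what removes the $\partial B_{7/8}$ integral, so the cancellation produces $\Delta h = 0$ distributionally in $B_{7/8}$ rather than merely $\Delta h$ equal to some boundary distribution supported on $\partial B_{7/8}$.
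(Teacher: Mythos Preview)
Your argument is correct and takes a genuinely different route from the paper's. The paper first regularizes (smooth $A$, $\Phi$, $\eta$, hence smooth $w$), then uses Fubini to select a radius $t\in[\tfrac34,1]$ with $\|w\|_{L^p(\partial B_t)}\lesssim\|w\|_{L^p(B_1)}$, defines $h$ as the harmonic function in $B_t$ with boundary datum $w$, and finally identifies $w-h$ as the adjoint solution of an explicit boundary value problem to which Lemma~\ref{Lemma_adjointcalderon} applies; an approximation removes the smoothness hypothesis. Your approach instead rewrites $\mathcal L^*w=\mathrm{div}^2\Phi+\eta$ as $\Delta w=\mathrm{div}^2\tilde\Phi+\eta$ with $\tilde\Phi=\Phi+(I-A)w$, solves directly for the ``bad part'' $\phi$ via Lemma~\ref{Lemma_adjointcalderon} on an intermediate ball with $\psi\equiv 0$, and invokes Weyl's lemma to see that $h=w-\phi$ is harmonic. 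This is cleaner: it avoids both the regularization step and the Fubini slicing, and the choice $\psi\equiv 0$ neatly kills the boundary term so that the two weak formulations match on $C_c^\infty(B_{7/8})$.

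One point to flag: as written, your triangle-inequality bound gives
\[
\|h\|_{L^p(B_{3/4})}\le \|w\|_{L^p(B_1)}+C\bigl[\|\Phi\|_{L^p(B_1)}+\|A-I\|_{L^\infty(B_1)}\|w\|_{L^p(B_1)}+\|\eta\|_{L^\infty(B_1)}\bigr],
\]
which is \emph{not} the first line of \eqref{desigualdelemilla} as stated, since it carries the extra $\|\Phi\|$ and $\|\eta\|$ terms. The paper's construction, where $h$ is the harmonic extension of $w|_{\partial B_t}$, gives the clean bound $\|h\|_{L^p}\le M\|w\|_{L^p}$ directly. For the sole application of this lemma (the iteration in the proof of Lemma~\ref{continuityofadjointsolution}) your weaker bound is harmless, because there the quantities $\|\Phi\|_{L^p}$ and $\|\eta\|_{L^\infty}$ are themselves controlled by $\omega(4^{-k}\delta)$ at each step; but strictly speaking you have proved a slight variant of the lemma rather than the lemma itself.
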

\begin{proof}[Proof of Lemma \ref{perturbationlemma}]
We first prove Lemma \ref{perturbationlemma} assuming that the coefficients of $\Lc$ and  data are smooth in $\overline B_1$. However, the constants in the estimate will only depend on $p$, $\lambda$, $n$ and the modulus of continuity of $A$. Under these assumptions, the regularity theory \cite{Necas,LionsMagenes}, implies that $w$ is smooth in $B_1$. By Fubini's theorem, there is $\frac{3}{4}\le t\le 1$ such that 
\begin{equation}\label{Fubini}
\|w\|_{L^p(\p B_t)}\leq 4^{-\frac 1p}\|w\|_{L^p(B_1)}.
\end{equation}
Using Lemma \ref{Lemma_adjointcalderon} we can find a function $h$ such that
\begin{equation*}
\begin{cases}
\Delta^*h=0,\ &\text{in }B_t,\\
h=w,\ &\text{on }\p B_t,
\end{cases}
\end{equation*}
in the sense of \eqref{adjoint_problem}. Of course, $h$ is harmonic in the interior of $B_t$.
Moreover, the estimate provided by Lemma \ref{Lemma_adjointcalderon} together with \eqref{Fubini} imply
\begin{equation}\label{acotacionarmonica}
\|h\|_{L^p(B_t)}\leq M\|w\|_{L^p(\p B_t)}\leq M4^{-\frac{1}{p}}\|w\|_{L^p(B_1)},
\end{equation}
with $M=M(p,n)$.
Then $w-h$ satisfies
\begin{equation}\label{satisfiedequationlemma}
\begin{split}
\int_{B_t}\left(w-h\right)\Lc udx&=\int_{B_t} tr\left[h(I-A)D^2 u\right]dx+\int_{B_t} tr\left[\Phi D^2 u\right]dx\\
&+\int_{B_t} \eta u dx+\int_{\p B_t} w \left(A-I\right)\nabla u \cdot \nu d\sigma\\
&=\int_{B_t} tr\left[h(I-A)D^2 u\right]dx+\int_{B_t} tr\left[\Phi D^2 u\right]dx\\
&+\int_{B_t} \eta u dx+\int_{\p B_t} w \frac{\left(A-I\right)\nu\cdot \nu}{A\nu\cdot \nu}A\nabla u \cdot \nu d\sigma
\end{split}
\end{equation}
for any $u\in W^{2,p'}(B_t)\cap W_0^{1,p'}(B_t)$. Therefore, $w-h$ is an adjoint solution to a problem which falls into the conditions of Lemma \ref{Lemma_adjointcalderon} and we can apply \eqref{desicalderonadjoint} to the equation \eqref{satisfiedequationlemma} to get that
\begin{multline*}
\|w-h\|_{L^p(B_t)}\leq M \left[ \|A-I\|_{L^\infty(B_t)}\|h\|_{L^p(B_t)} \right. \\ \left. +\|\Phi\|_{L^p(B_t)}+\|A-I\|_{L^\infty(B_t)}\|w\|_{L^p(\p B_t)}+\|\eta\|_{L^p(B_t)} \right],
\end{multline*}
which together with \eqref{acotacionarmonica} imply the desired estimate.
Finally, an approximation argument allows us to derive the same estimate under the more general conditions mentioned above.
\end{proof}

The perturbative technique used in the proof of Lemma \ref{continuityofadjointsolution} is based on the local smallness of certain quantities. We may assume that $A(0)=I$ and that $\theta$ is a Dini modulus of continuity for $A$ on $B_4$. For this reason, if $v$ and $\zeta$ verify the conditions in Lemma \ref{continuityofadjointsolution}, it is handy to define for $0<t,\,\delta\le 1$,
\begin{equation*}
\omega(t)=t^2+\theta(t),\quad \overline{\delta}=M^{-1}\delta^\frac{n}{p}\frac{\omega(\delta)}{1+\|v\|_{L^p(B_1)}+\|\zeta\|_{L^\infty(B_1)}},
\end{equation*}
where $M$ is the constant in \eqref{desigualdelemilla}, and to consider the rescaled functions 
\begin{equation} \label{cantidadesreescaladas}
v_{\delta}(x)=\overline{\delta} v(\delta x),\ \ \ \zeta_\delta(x)=\overline{\delta}\delta^2 \zeta(\delta x).
\end{equation}
From \eqref{doubling}
\begin{equation}\label{doubling2}
\omega(4t)\leq 16\,\omega(t),\ \text{for}\ t\leq 1/4
\end{equation}
and the dilation and rescaling yield 
\begin{equation}\label{unosreescalamientos}
\|v_\delta\|_{L^p(B_1)}\leq M^{-1}\omega(\delta),\ \ \ \|\zeta_\delta\|_{L^\infty(B_1)}\leq M^{-1}\delta^2\omega(\delta).
\end{equation}
Also, 
\begin{equation}\label{queecuacion}
\mathcal L_\delta^\ast v_\delta=\zeta_{\delta},\ \text{in}\ B_1,\quad\text{with}\ \mathcal L_\delta u=tr\left(A(\delta x)D^2u\right).
\end{equation}
Next, we show by induction that there are $C>0$, $0<\delta\le 1$ and harmonic functions $h_k$ in $4^{-k}B_{\frac{3}{4}}$, $k\ge 0$, such that
\begin{equation} \label{induccion}
\begin{split}
&C^{-1}\|h_k\|_{L^p(4^{-k}B_\frac{3}{4})}+\|v-\sum_{j=0}^k h_j\|_{L^p(4^{-k}B_\frac{1}{4})} 
\leq  4^{-k\frac{n}{p}}\omega(4^{-k}\delta),\\
&\|h_k\|_{L^\infty(4^{-k}B_{\frac{1}{2}})}+4^{-k}\|\nabla h_k\|_{L^\infty(4^{-k}B_{\frac{1}{2}})}\leq  C\omega(4^{-k}\delta),
\end{split}
\end{equation}
where $C$ depends on $n,p,\lambda$ and the modulus of continuity of $A$.

When $k=0$, \eqref{unosreescalamientos}, \eqref{queecuacion} and Lemma \ref{perturbationlemma} applied to $v_\delta$ show that there is a harmonic function $h_0$ in $B_{\frac{3}{4}}$ such that
\begin{equation*}
\begin{split}
\|h_0\|_{L^p(B_{\frac{3}{4}})}&\leq M\|v_\delta\|_{L^p(B_1)}\leq   \omega(\delta),\\
\|v_\delta-h_0\|_{L^p(B_{\frac{3}{4}})}&\leq M \left[\theta(\delta)\|v_\delta\|_{L^p(B_1)}+\|\zeta_\delta\|_{L^\infty(B_1)} \right]
\leq  \omega(\delta)^2.
\end{split}
\end{equation*}
By regularity of harmonic functions \cite[\S 2.2.3c]{Evans}
\begin{equation*}
\|h_0\|_{L^{\infty}(B_{\frac{1}{2}})}+\|\nabla h_0\|_{L^\infty(B_{\frac{1}{2}})}\leq C(n,p)\|h_0\|_{L^p(B_{\frac{3}{4}})}\leq C(n,p)\omega(\delta).
\end{equation*}
Thus, \eqref{induccion} holds  for $k=0$, when $C$ and $\delta$ satisfy 
\begin{equation}\label{E:1}
C^{-1}+\omega(\delta)\le 1\ \text{and}\  C\ge C(n,p).
\end{equation}
Now, assume that \eqref{induccion} holds up to some $k\ge 0$ and define 
\begin{equation*}
\begin{split}
A_{k+1}(x)=A(4^{-k-1}\delta x)&,\ \ \ \ \Lc_{k+1}u=tr(A_{k+1}(x) D^2u)\\
G_{k+1}(x)=\left(I-A_{k+1}(x)\right)&\sum_{j=0}^k h_j(4^{-k-1}x).
\end{split}
\end{equation*}
Then, $W_{k+1}(x)=v_\delta(4^{-k-1}x)-\sum_{j=0}^k h_j(4^{-k-1}x)$ solves 
\begin{equation}\label{ecuaciondelauwe}
\Lc_{k+1}^* W_{k+1}(x)=div^2G_{k+1}+4^{-2k-2}\zeta_\delta(4^{-k-1}x),\ \text{in}\ B_1.
\end{equation}
Using the induction hypothesis \eqref{induccion} and \eqref{doubling2} , one finds that $G_{k+1}$ satisfies
\begin{equation}\label{acotaciongmayuscula}
\begin{split}
\|G_{k+1}\|_{L^p(B_1)}&\leq |B_1|^\frac{1}{p} \theta(4^{-k-1}\delta)\sum_{j=0}^k\|h_j(4^{-k-1}\cdot)\|_{L^\infty(B_1)}\\
&\leq \left[32\, C|B_1|^{\frac 1p}\int_0^\delta\frac{\omega(t)}{t}\,dt\right]\theta(4^{-k-1}\delta).
\end{split}
\end{equation}
Besides, the inequality in the first line of \eqref{induccion} gives
\begin{equation}\label{acotacionuwedoble}
\|W_{k+1}\|_{L^p(B_1)}\leq 4^{\frac{n}{p}} \omega(4^{-k}\delta).
\end{equation}
From \eqref{doubling2}, \eqref{ecuaciondelauwe}, \eqref{acotaciongmayuscula} and  \eqref{acotacionuwedoble}, apply Lemma \ref{perturbationlemma} to $W_{k+1}$ to find that with the same $M$, there is a harmonic function $\tilde{h}_{k+1}$ in $B_{\frac{3}{4}}$ such that 
\begin{equation} \label{lados}
\|\tilde{h}_{k+1}\|_{L^p(B_{\frac{3}{4}})}\leq  4^{2+\frac{n}{p}}M\omega(4^{-k-1}\delta).
\end{equation}
and
\begin{equation} \label{launo}
\|W_{k+1}-\tilde{h}_{k+1}\|_{L^p(B_{\frac{3}{4}})} \leq M\left[32\,|B_1|^{\frac 1p}C\int_0^\delta\frac{\omega(t)}t\,dt+\omega(\delta)\right]\omega(4^{-k-1}\delta).
\end{equation}
From standard interior estimates for harmonic functions and \eqref{lados}
\begin{equation}\label{latres}
\|\tilde{h}_{k+1}\|_{L^\infty(B_\frac{1}{2})}+\|\nabla \tilde{h}_{k+1}\|_{L^\infty(B_{\frac{1}{2}})}\le C(n,p) 4^{2+\frac{n}{p}}M\omega(4^{-k-1}\delta).
\end{equation}
Setting, $h_{k+1}(x)=\tilde{h}_{k+1}(4^{k+1}x)$, the last three formulae and \eqref{E:1} show that the induction hypothesis holds when $C=2\,C(n,p)\left[4^{2+\frac np}M+1\right]$ and $\delta$ is determined by the condition
\begin{equation*}
2M\left[32\, |B_1|^{\frac 1p} C\int_0^\delta\frac{\omega(t)}t\,dt+\omega(\delta)\right]\le 1.
\end{equation*}
On the other hand, for $|x|\leq 4^{-k-1}$
\begin{equation}\label{acotacionpuntual}
\begin{split}
|\sum_{j=0}^k h_j(x)-\sum_{j=0}^\infty h_j(0)|&\leq \sum_{j=k+1}^\infty|h_j(0)|+4^{-k-1}\sum_{j=0}^k\|\nabla h_j\|_{L^\infty(4^{-k}B_{\frac{1}{4}})}\\
&\leq 16\, C\left(\int_0^{4^{-k}\delta}\frac{\omega(t)}{t}\,dt+4^{-k-1}\delta\int_{4^{-k-1}\delta}^\delta\frac{\omega(t)}{t^2}\,dt \right)\\
\end{split}
\end{equation}
Therefore, \eqref{induccion} together with \eqref{acotacionpuntual} and \eqref{doubling2} imply
\begin{multline}\label{implicacontinuidad}
\text{\rlap |{$\int_{4^{-k-1}B_1}$}}|v_\delta (x)-\sum_{j=0}^\infty h_j(0)|\,dx\le \\ \leq 4^4 C\left[\int_0^{4^{-k-1}\delta}\frac{\omega(t)}{t}dt+4^{-k-1}\delta\int_{4^{-k-1}\delta}^\delta\frac{\omega(t)}{t^2}dt+\omega(4^{-k-1}\delta)\right],
\end{multline}
when $k\ge 0$. Using Fubini's theorem it is easy to check that $t\int_{t}^1 \frac{\omega(s)}{s^2}\,ds$ is a Dini modulus of continuity, one can verify that
\begin{equation*}
\sigma(t)=\int_0^t\frac{\omega(s)}s\,ds+t\int_t^1\frac{\omega(s)}{s^2}\,ds+\omega(t)
\end{equation*}
is non-decreasing and derive that
$\lim_{t\to 0^+}\sigma(t)\rightarrow 0$. Hence, from \eqref{implicacontinuidad} and \eqref{cantidadesreescaladas}, we have proved that there are $C>0$, depending on $\lambda$, $n$ and the modulus of continuity Dini of $A$, and a number $a(0)$ such that
\begin{equation}\label{implicacontinuidad1}
\text{\rlap |{$\int_{B_r}$}}|v(x)-a(0)|dx\leq C \sigma(r)\left[\|v\|_{L^p(B_1)}+\|\zeta\|_{L^\infty(B_1)}\right],\ \text{when}\ 0<r\le 1.
\end{equation}
Since $v\in L^p(B_4)$ is an adjoint solution in $B_4$, we can repeat the proof of \eqref{implicacontinuidad1} in balls of radius $1$ centered at any point $\xb\in B_3$. We note that the constant $C$ and the modulus of continuity $\sigma$ in \eqref{implicacontinuidad} do not depend on the center of the ball, hence, for each $\xb\in B_3$ we can find a number $a(\xb)$ such that
\begin{equation}\label{implicacontinuidad2}
\text{\rlap |{$\int_{B_r(\xb)}$}}|v(x)-a(\xb)|dx\leq C \sigma(r)\left[\|v\|_{L^p(B_4)}+\|\zeta\|_{L^\infty(B_4)}\right],\ \text{when }\ 0<r\le 1.
\end{equation}
By Lebesgue's differentiation theorem, $u$ and $a$ are equal a.e. in $B_3$. Now, if $\xb$ and $\yb$ are in $B_3$ and $\frac r2\leq |\xb-\yb|\leq r$, we have
\begin{equation*}
\begin{split}
|u(\xb)-u(\yb)|&\le \text{\rlap |{$\int_{B_r(\xb)}$}}|u(\xb)-u(x)|dx+\text{\rlap |{$\int_{B_r(\xb)}$}}|u(x)-u(\yb)|dx\\
&\lesssim \text{\rlap |{$\int_{B_r(\xb)}$}}|u(\xb)-u(x)|dx+\text{\rlap |{$\int_{B_r(\yb)}$}}|u(x)-u(\yb)|dx\\
& \lesssim \sigma(2r)\left[\|v\|_{L^p(B_4)}+\|\zeta\|_{L^\infty(B_4)}\right],\ \text{when }\ 0<r\le 1/2.
\end{split}
\end{equation*}
which proves  Lemma \ref{continuityofadjointsolution}.

\section{Proof of Theorem \ref{Th:improved_regularity}} \label{proofofregularity}

It suffices to show that if $u$ in $W^{2,1}(B_4)$ verifies $\Lc u=f$, with $f$ in $L^{p}(B_4)$, $1<p<\infty$, then $u\in W^{2,q}(B_1)$, for some $q>1$. Let then $\eta$ be a function in $C_0^{\infty}(B_2)$ with $\eta=1$ in $B_1$ and $0\leq \eta\leq 1$. Set $q=\min\left\lbrace\frac{n}{n-1},p\right\rbrace$ and let $\varphi$ be in $C^\infty_0(B_3)$ with $\|\varphi\|_{L^{q'}(B_3)}\leq 1$.
We shall show that
\begin{equation}\label{acotaciondualidad}
\left|\int_{B_4}\p_{kl}(u\eta)\varphi\,dx\right|\leq C \left[\|f\|_{L^p(B_4)}+\|u\|_{W^{2,1}(B_4)}\right],
\end{equation}
where $C$ only depends on $q$, $p$, $\lambda$, $n$ and the uniform modulus of continuity of the coefficients $A$, but not on the Dini modulus of continuity of $A$. 

Let $u_\e$ in $C^\infty(B_4)$ be a sequence of functions converging to $u$ in $W^{2,1}_{loc}(B_4)$ as $\e \rightarrow 0$, then  for any $\varphi$ in $C_0^{\infty}(B_3)$ we have
$$\int_{B_4}\p_{kl}(u \eta)\varphi\,dx=\lim_{\e\rightarrow 0}\int_{B_4}\p_{kl}(u_\e \eta)\varphi\,dx.$$

By Lemma \ref{Lemma_adjointcalderon} with $\Omega=B_4$ and $p=q'$, for $k,l\in\{1,\dots,n\}$, there is a unique weak adjoint solution $v$ in $L^{q'}(B_4)$ to
\begin{equation*}
\begin{cases}
\Lc^*v=\partial_{kl}\varphi,\ &\text{on}\ B_4,\\
v=0,\ &\text{on}\ \p B_4.
\end{cases}
\end{equation*}
That is, a function $v$ in $L^{q'}(B_4)$ such that
\begin{equation*}
\begin{split}
\int_{B_4} v\,\Lc w\,dy&=\int_{B_4}\varphi\,\partial_{kl} w\, dy,
\end{split}
\end{equation*}
for any $w$ in $W^{2,q}(B_4)\cap W^{1,q}_0(B_4)$ and
\begin{equation}\label{acotacionLqadjoint}
\|v\|_{L^{q'}(B_4)}\leq C \|\varphi\|_{L^{q'}(B_3)}\leq C.
\end{equation} 
Observe that $u_\e\eta$ is in $W^{2,q}(B_4)\cap W^{1,q}_0(B_4)$,  for any $\e>0$. Thus, 
\begin{equation}\label{ojoconestaindentidad}
\int_{B_4}\p_{kl}(u_\e \eta)\varphi\,dx= \int_{B_4}v\,\mathcal L(u_\e \eta)\,dx. 
\end{equation}

Now, we want to take limits in \eqref{ojoconestaindentidad} as $\e\rightarrow 0$. A priori, we only know that $\p_{kl}u$ is in $L^1(B_4)$, so we can just assert that $\Lc(u_\e\eta)\rightarrow\Lc(u\eta)$ in $L^1(B_4)$ as $\e\rightarrow 0$. However, in order to take the limit as $\e\rightarrow 0$ inside of the integral in the right-hand side of \eqref{ojoconestaindentidad} and because of the support properties of the functions involved, we only need to know that $v$ is bounded in $\overline B_3$, which indeed is the case because of Lemma \ref{continuityofadjointsolution}, with $\zeta=\partial_{kl}\varphi$.
Hence, we obtain
\begin{equation*}
\begin{split}
\int_{B_4}\p_{kl}(u \eta)\varphi\,dx&=\int_{B_4}v\,\Lc(u\eta)\,dx=\int_{B_4}v\eta\,\Lc u\,dx+\int_{B_4}v u\,\Lc\eta \,dx\\
&+2\int_{B_4}v A \nabla u \cdot \nabla\eta\,dx\triangleq J_1+J_2+J_3.
\end{split}
\end{equation*}
Now, H\"older's inequality, Sobolev's inequality and \eqref{acotacionLqadjoint} yield
\begin{equation*}
\begin{split}
|J_1|&\leq  \|v\|_{L^{q'}(B_4)} \|\Lc u\|_{L^q(B_4)}\leq C \|f\|_{L^p(B_4)},\\
|J_2|&\leq M \|v\|_{L^{q'}(B_4)} \|u\|_{L^q(B_4)}\leq C \|u\|_{W^{1,1}(B_4)},\\
|J_3|&\leq M \|v\|_{L^{q'}(B_4)} \|\nabla u\|_{L^q(B_4)}\leq C \|u\|_{W^{2,1}(B_4)},
\end{split}
\end{equation*}
which implies \eqref{acotaciondualidad}, and by density and duality
\begin{equation*}
\|\p_{kl}(u\eta)\|_{L^q(B_3)}\leq C \left[\|f\|_{L^p(B_4)}+\|u\|_{W^{2,1}(B_4)}\right].
\end{equation*}
Therefore, $u$ is in $W^{2,q}(B_1)$ and 
$$\|u\|_{W^{2,q}(B_1)}\leq C \left[\|f\|_{L^p(B_4)}+\|u\|_{W^{2,1}(B_4)}\right],$$
which is the desired estimate.

\bibliographystyle{plain}
\bibliography{bibliografia}{}
\end{document}